% Sample article for the Electronic Journal of Linear Algebra

% SIAM LaTeX style is used
\documentclass[10pt,twoside]{siamltex}
\usepackage{amsfonts,epsfig}

% Text dimensions
\setlength{\textheight}{190mm}
\setlength{\textwidth}{130mm}
\topmargin = 20mm

% New spacing effective with Volume 17 (2008)

\setlength{\parskip}{.1in}

% Box for end of proof outside environment

% Common extra environments
\newtheorem{remark}[theorem]{Remark}

\newtheorem{example}[theorem]{Example}

% Symbols for real and complex numbers
\newcommand{\reals}{\mathbb{R}}

\newcommand{\ap}{\alpha}
\newcommand{\bt}{\beta}
\newcommand{\ld }{\lambda}

\newcommand{\sm}{\,\sigma\,}

\newcommand{\norm}[1]{\Vert #1 \Vert}

\newcommand{\HH}{H}

\newcommand{\N}{N}

\begin{document}

%  Leave these commented lines here 
% \input{elaheader-volx-xx.tex}
% \setcounter{page}{1}

% \renewcommand{\thefootnote}{\fnsymbol{footnote}}
% \renewcommand{\thefootnote}{\arabic{footnote}}
% \renewcommand{\theequation}{\thesection.\arabic{equation}}

\bibliographystyle{plain}
\title{Operator Connections and Borel Measures on the Unit Interval
        %\thanks{This
        %work was supported by the Society for Industrial and
        %Applied Mathematics, Philadelphia, Pennsylvania.}
        }

% The thanks line in the title should be filled in if there is
% any support acknowledgement for the overall work to be included
% This \thanks is also used for the received by date info, but
% authors are not expected to provide this.

\author{Pattrawut Chansangiam\thanks{Department of Mathematics, Faculty of Science,
    King Mongkut's Institute of Technology Ladkrabang,
    Bangkok 10520, THAILAND ({\tt kcpattra@kmitl.ac.th}). }
        \and Wicharn Lewkeeratiyutkul\thanks{Department of Mathematics and Computer Science, 
        Faculty of Science,
		Chulalongkorn University, Bangkok 10330, THAILAND  ({\tt Wicharn.L@chula.ac.th}).}}

\pagestyle{myheadings}
\markboth{P.\ Chansangiam, and W.\ Lewkeeratiyutkul}{Operator Connections and Borel Measures on the Unit Interval}
\maketitle

\begin{abstract}
A connection is a binary operation for positive operators satisfying
the monotonicity, the transformer inequality and the joint-continuity from above.
A mean is a normalized connection.
In this paper, we show that there is a one-to-one correspondence between connections and
finite Borel measures on the unit interval via a suitable integral representation.
Every mean can be regarded as an average of weighted harmonic means.
Moreover, we investigate decompositions of connections, means, symmetric connections and symmetric means.
\end{abstract}

\begin{keywords}
operator connection, operator mean, operator monotone function, Borel measure
\end{keywords}

\begin{AMS}
46G10, 47A63, 47A64
\end{AMS}

\section{Introduction}

A general theory of connections and means for positive operators
was given by Kubo and Ando \cite{Kubo-Ando}.
Let $B(\HH)$ be the von Neumann algebra of bounded linear operators on
a Hilbert space $\HH$. The set of positive operators on $\HH$ is denoted by $B(\HH)^+$.
For Hermitian operators $A,B \in B(\HH)$,
the partial order $A \leq B$ means that $B-A \in B(\HH)^+$.
A \emph{connection} is a binary operation $\sm$ on $B(\HH)^+$
such that for all positive operators $A,B,C,D$:
\begin{enumerate}
	\item[(M1)] \emph{monotonicity}: $A \leq C, B \leq D$ implies $A \sm B \leq C \sm D$
	\item[(M2)] \emph{transformer inequality}: $C(A \sm B)C \leq (CAC) \sm (CBC)$
	\item[(M3)] \emph{continuity from above}:  for $A_n,B_n \in B(\HH)^+$,
                if $A_n \downarrow A$ and $B_n \downarrow B$,
                 then $A_n \sm B_n \downarrow A \sm B$.
                 Here, $A_n \downarrow A$ indicates that $A_n$ is a decreasing sequence
                 and $A_n$ converges strongly to $A$.
\end{enumerate}
Two trivial examples are the left-trivial mean $(A,B) \mapsto A$ and the right-trivial mean $(A,B) \mapsto B$.
Typical examples of a connection are the sum $(A,B) \mapsto A+B$ and the parallel sum
\[
    A \,:\,B = (A^{-1}+B^{-1})^{-1}, \quad A,B>0,
\]
the latter being introduced by Anderson and Duffin \cite{Anderson-Duffin}.
A \emph{mean} is a connection $\sigma$ with normalized condition $I \sm I = I$ or, equivalently,
fixed-point property $A \sm A =A$ for all $A \geq 0$.
The class of Kubo-Ando means cover many well-known operator means
in practice, e.g.
	\begin{itemize}
		\item	$t$-weighted arithmetic means: $A \nabla_{t} B = (1-t)A + tB$
		\item	$t$-weighted geometric means:
    			$A \#_{t} B =  A^{1/2}
    			({A}^{-1/2} B  {A}^{-1/2})^{t} {A}^{1/2}$
		\item	$t$-weighted harmonic means: $A \,!_t\, B = [(1-t)A^{-1} + tB^{-1}]^{-1}$ % for $A,B>0$.
		\item	logarithmic mean: $(A,B) \mapsto A^{1/2}f(A^{-1/2}BA^{-1/2})A^{1/2}$ where $f: \reals^+ \to \reals^+$, $f(x)=(x-1)/\log{x}$.
	\end{itemize}
This axiomatic approach has many applications in operator inequalities
(e.g. \cite{Ando}, \cite{Furuta}, \cite{Mond_et_al.}),
operator equations (e.g. \cite{Anderson-Morley_matrix_equation}, \cite{Lim_matrix_eq})
and operator entropy (\cite{Fujii_entropy}).

A fundamental tool of Kubo-Ando theory of connections and means is the theory of
operator monotone functions.
This concept is introduced in \cite{Lowner}; see also \cite{Bhatia}, \cite{Hiai}, \cite{Hiai-Yanagi}.
A continuous real-valued function $f$ on an interval $I$ is called an \emph{operator monotone function} if,
for all Hermitian operators $A,B \in B(\HH)$ whose spectrums are contained in $I$ and for all Hilbert spaces $\HH$,
we have
\[
	A \leq B \Longrightarrow f(A) \leq f(B).
\]
A major core of Kubo-Ando theory is the interplay between connections,
operator monotone functions and Borel measures.
Note first that if $\sigma$ and $\eta$ are connections and $k_1,k_2 \in \reals^+ = [0,\infty)$, the binary operation
\[
		k_1 \sigma + k_2 \eta: (A,B) \mapsto k_1 (A \,\sigma\, B) + k_2 (A \,\eta\, B)
\]
is also a connection. This shows that the set of connections on $B(\HH)^+$ forms a cone.
Introduce a partial order $\leq$ on this cone by $\sigma \leq \eta$
if and only if $A \,\sigma\, B \leq A \,\eta\, B$ for all $A,B \geq 0$.
Equip the cone of operator monotone functions from $\reals^+$ to $\reals^+$ with the pointwise order, i.e.
$f \leq g$ means that $f(x) \leq g(x)$ for all $x \in \reals^+$.
The cone of finite Borel measures on a topological space is also equipped with the usual partial order.
In \cite{Kubo-Ando}, a connection $\sigma$ on $B(\HH)^+$ can be characterized
via operator monotone functions as follows: %\\

\begin{theorem}[\cite{Kubo-Ando}] \label{thm: Kubo-Ando f and sigma}
	Given a connection $\sigma$, there is a unique operator monotone function $f: \reals^+ \to \reals^+$ satisfying
				\[
    			f(x)I = I \sm (xI), \quad x \in \reals^+.
				\]
	Moreover, the map $\sigma \mapsto f$ is an affine order-isomorphism. 
	%Here, the order-isomorphism means that when $\sigma_i \mapsto f_i$ for $i=1,2$,
	%we have $\sigma_1 \leq \sigma_2$ if and only if $f_1 \leq f_2$.
\end{theorem}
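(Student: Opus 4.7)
The plan is to prove Theorem~\ref{thm: Kubo-Ando f and sigma} in four stages: constructing $f$ from $\sigma$, verifying operator monotonicity and uniqueness, checking that $\sigma \mapsto f$ is an affine order-isomorphism onto its image, and (the main obstacle) showing surjectivity onto all operator monotone functions $\reals^+\to\reals^+$.

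\textbf{Construction and monotonicity of $f$.} Given $\sigma$, I examine $I \sm xI$. Applying (M2) first with a positive invertible $C$ and then with $C^{-1}$ yields the transformer \emph{equality}
\[
C(A \sm B)C \;=\; (CAC) \sm (CBC) \qquad (C>0 \text{ invertible}).
\]
Choosing $C = P + \varepsilon P^\perp$ for a projection $P$ and letting $\varepsilon\downarrow 0$ (using (M3)) forces $I \sm xI$ to commute with every projection, so $I \sm xI = f(x)\,I$ for some $f(x)\in\reals^+$. The same two-sided transformer equality applied with $C$ diagonal in the spectral basis of a positive $A$, combined with the $\varepsilon\downarrow 0$ block-selection trick and (M3), promotes this to $I \sm A = f(A)$ for every $A\geq 0$ (first for finite spectrum, then in general by spectral approximation). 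Operator monotonicity of $f$ is then immediate from (M1), continuity of $f$ from (M3), and uniqueness from the defining equation.

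\textbf{Affine order-isomorphism onto the image.} Affinity and the implication $\sigma\leq\eta \Rightarrow f_\sigma\leq f_\eta$ are direct from the definition. The converse uses the structural identity (from the transformer equality with $C = A^{-1/2}$)
\[
A \sm B \;=\; A^{1/2}\,f(A^{-1/2}B A^{-1/2})\,A^{1/2} \qquad (A \text{ positive invertible}),
\]
together with (M3) applied to $A + \varepsilon I \downarrow A$ to handle non-invertible $A$. Thus if $f_\sigma \leq f_\eta$ pointwise then $f_\sigma \leq f_\eta$ as operator-valued functions (both being operator monotone), and the identity yields $\sigma \leq \eta$.

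\textbf{Surjectivity, the main obstacle.} Given an operator monotone $f:\reals^+\to\reals^+$, I must construct a connection $\sigma$ with $f_\sigma = f$. Using L\"owner's integral representation
\[
f(x) \;=\; \alpha + \beta x + \int_{(0,\infty)} \frac{x(\lambda+1)}{x+\lambda}\,d\mu(\lambda)
\]
with $\alpha,\beta\geq 0$ and $\mu$ a finite positive Borel measure, I would define
\[
A \,\sigma\, B \;:=\; \alpha A + \beta B + \int_{(0,\infty)} (\lambda+1)\bigl(A\, !_{1/(\lambda+1)}\, B\bigr)\,d\mu(\lambda).
\]
Since each weighted harmonic mean satisfies (M1)--(M3), monotonicity and the transformer inequality pass through the integral, while joint continuity from above requires the dominated (or monotone) convergence theorem. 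The main technical work lies in verifying strong measurability of the integrand and establishing (M3) under the integral; once this is in place, evaluation at $(I,xI)$ recovers $f(x)I$, completing the proof.
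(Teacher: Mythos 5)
The paper offers no proof of this theorem --- it is imported verbatim from Kubo--Ando --- so the only comparison available is with their classical argument, and your reconstruction does follow that strategy: scalarize $I \sm (xI)$ via the two-sided transformer equality, derive $A \sm B = A^{1/2} f(A^{-1/2}BA^{-1/2})A^{1/2}$ for invertible $A$ to get the order-isomorphism, and prove surjectivity by building a connection from L\"owner's integral representation. The first two stages are sound in outline (the projection-commutation step is only sketched, but it is the standard lemma).

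The surjectivity stage, however, contains a concrete error that breaks the final verification. With your weight $t = 1/(\lambda+1)$ one computes
\[
(\lambda+1)\bigl(1 \,!_{1/(\lambda+1)}\, x\bigr)
= (\lambda+1)\Bigl[\frac{\lambda}{\lambda+1} + \frac{1}{(\lambda+1)x}\Bigr]^{-1}
= \frac{x(\lambda+1)^2}{\lambda x + 1},
\]
which is not the L\"owner kernel $\frac{x(\lambda+1)}{x+\lambda}$; already at $x=1$ your integrand equals $\lambda+1$ instead of $1$. Hence evaluating your $\sigma$ at $(I,xI)$ does not recover $f(x)I$, and the claimed conclusion fails. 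Worse, the unbounded prefactor $\lambda+1$ can make the defining Bochner integral divergent for a finite measure $\mu$ with sufficient mass near $\infty$ (e.g.\ $d\mu(\lambda) = (1+\lambda)^{-3/2}\,d\lambda$), so the proposed binary operation need not even be well defined. The correct integrand is $A \,!_{\lambda/(\lambda+1)}\, B$ with no prefactor, which coincides with $\frac{\lambda+1}{\lambda}\{(\lambda A) : B\}$ in the paper's formula (\ref{eq: formula of connection}); note that $1 \,!_{\lambda/(\lambda+1)}\, x = \frac{x(\lambda+1)}{x+\lambda}$ exactly. With that substitution your argument goes through, and the change of variables $\lambda \mapsto \lambda/(\lambda+1)$ you would then be using is precisely the map $\Psi$ the paper exploits in its proof of Theorem \ref{thm: 1-1 conn and measure}.
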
 

We call $f$ the \emph{representing function} of $\sigma$.
A connection also has a canonical representation with respect to a Borel measure as follows.

\begin{theorem}[\cite{Kubo-Ando}] 
	Given a connection $\sigma$, there is a unique finite Borel measure $\mu$ on $[0,\infty]$ such that
				\begin{equation}
    			A \sm B = \ap A + \bt B + \int_{(0,\infty)} \frac{\ld+1}{\ld} \{ (\ld A) \,:\,B\}\, d\mu(\ld),
    			\quad A,B \geq 0
    			\label{eq: formula of connection}
				\end{equation}
				where the integral is taken in the sense of Bochner, $\ap = \mu(\{0\})$ and $\bt = \mu(\{\infty\})$.
				Moreover, the map $\sigma \mapsto \mu$ is an affine isomorphism.
\end{theorem}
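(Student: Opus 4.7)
My plan is to reduce the sought operator-valued integral representation to a scalar integral representation of the associated operator monotone function, then transfer the scalar formula back to the operator setting. By Theorem~\ref{thm: Kubo-Ando f and sigma}, every connection $\sm$ corresponds to a unique operator monotone $f:\reals^+\to\reals^+$ via $f(x)I = I\sm(xI)$, and the map $\sm\mapsto f$ is an affine order-isomorphism. So it suffices to produce, canonically, a finite Borel measure $\mu$ on $[0,\infty]$ encoding $f$, and then check that the scalar formula for $f$ translates term by term into the claimed operator formula (\ref{eq: formula of connection}).

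\textbf{Step 1 (scalar representation).} I would invoke the Nevanlinna/L\"owner representation: every operator monotone $f:\reals^+\to\reals^+$ extends to a Pick function on the upper half-plane, and therefore admits a unique representation
\[
f(x) \;=\; \ap + \bt x + \int_{(0,\infty)} \frac{(\ld+1)x}{\ld+x}\, d\mu(\ld), \qquad x>0,
\]
with $\mu$ a finite positive Borel measure on $[0,\infty]$, $\ap=\mu(\{0\})$ and $\bt=\mu(\{\infty\})$. The particular kernel $(\ld+1)x/(\ld+x)$ is chosen because it is bounded in $\ld\in(0,\infty)$ for each fixed $x>0$ (behaving like $x$ near $0$ and like a constant near $\infty$), which is precisely what forces the representing measure to be finite; uniqueness of $\mu$ follows from the Stieltjes inversion formula.

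\textbf{Step 2 (transfer to operators).} For strictly positive invertible $A,B$, the operator monotone functional calculus together with the transformer equality yield $A\sm B = A^{1/2}f(A^{-1/2}BA^{-1/2})A^{1/2}$. Inserting the scalar formula and using the algebraic identity
\[
A^{1/2}\,\frac{(\ld+1)T}{\ld+T}\,A^{1/2} \;=\; \frac{\ld+1}{\ld}\,\bigl\{(\ld A):B\bigr\},\qquad T=A^{-1/2}BA^{-1/2},
\]
gives~(\ref{eq: formula of connection}) pointwise in $\ld$; Bochner integrability of the integrand follows from a uniform norm bound in terms of $\Vert A\Vert$, $\Vert B\Vert$, and $f(1)$. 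To extend to arbitrary $A,B\geq 0$, I would apply the identity to $A+\epsilon I,B+\epsilon I$, let $\epsilon\downarrow 0$, and invoke continuity from above (M3) together with dominated convergence for the Bochner integral.

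\textbf{Step 3 (bijection and main obstacle).} Conversely, each finite Borel measure $\mu$ on $[0,\infty]$ produces via~(\ref{eq: formula of connection}) an operation satisfying (M1)--(M3), because each integrand $(A,B)\mapsto \tfrac{\ld+1}{\ld}(\ld A):B$ is itself a connection (a scaled parallel sum) and these axioms pass through the Bochner integral; uniqueness of $\mu$ follows from the uniqueness in Step~1 via Theorem~\ref{thm: Kubo-Ando f and sigma}, and affinity of the assignment $\sm\mapsto\mu$ is immediate at every stage. The technical heart of the argument is Step~1: producing the scalar integral representation with a \emph{finite} measure on the compactified interval $[0,\infty]$ and correctly identifying the endpoint masses with the coefficients $\ap,\bt$. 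A secondary nuisance is justifying the $\epsilon\downarrow 0$ limit in Step~2 uniformly in $\ld$ so that dominated convergence applies.
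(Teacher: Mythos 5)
The paper offers no proof of this statement---it is quoted directly from Kubo--Ando---but your outline is the standard argument and coincides with the toolkit the paper itself assembles: reduce to the representing function via Theorem~\ref{thm: Kubo-Ando f and sigma}, invoke the L\"owner integral representation (the paper's Lemma~\ref{thm: Lowner}), and transfer back through $A\sm B=A^{1/2}f(A^{-1/2}BA^{-1/2})A^{1/2}$ using the identity $A^{1/2}(\lambda+1)T(\lambda+T)^{-1}A^{1/2}=(\lambda+1)(A^{-1}+\lambda B^{-1})^{-1}=\frac{\lambda+1}{\lambda}\{(\lambda A):B\}$, which you state correctly. The only blemish is the parenthetical description of the kernel's boundary behavior, which is reversed: for fixed $x>0$ the kernel $\frac{(\lambda+1)x}{\lambda+x}$ tends to $1$ as $\lambda\to 0$ and to $x$ as $\lambda\to\infty$ (consistent with $\mu(\{0\})$ contributing $\alpha A$ and $\mu(\{\infty\})$ contributing $\beta B$); this does not affect the argument.
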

%\\

We call $\mu$ the \emph{representing measure} of $\sigma$.
In particular, every connection arises in form (\ref{eq: formula of connection}).
A connection is a mean if and only if its representing function is normalized (i.e. $f(1)= 1$)
or, equivalently, its representing measure is a probability measure.

In this paper, we characterize connections, means, symmetric connections and symmetric means in terms of Borel
measures on the unit interval.
The main result of this paper is the following:
%\\

\begin{theorem} \label{thm: 1-1 conn and measure}
Given a finite Borel measure $\mu$ on $[0,1]$, the binary operation
	\begin{equation}
		A \sm B = \int_{[0,1]} A \,!_t\, B \,d \mu(t), \quad A,B \geq 0
		\label{eq: int rep connection}
	\end{equation}
	is a connection on $B(\HH)^+$.
	Moreover, the map $\mu \mapsto \sigma$ is bijective, affine and order-preserving,
	in which case the representing function of $\sigma$ is given by
	\begin{equation}
		f(x) = \int_{[0,1]} 1 \,!_t\, x \,d \mu(t), \quad x \geq 0. \label{int rep of OMF}
	\end{equation}
\end{theorem}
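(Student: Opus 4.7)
My strategy is to verify that (\ref{eq: int rep connection}) defines a connection, compute its representing function by direct substitution, and then derive the bijection by transferring the Kubo--Ando representation from $[0,\infty]$ to $[0,1]$ via the homeomorphism $\varphi(t) = t/(1-t)$.

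First I would check that the Bochner integral in (\ref{eq: int rep connection}) makes sense: for $A, B > 0$ the integrand $t \mapsto [(1-t)A^{-1} + tB^{-1}]^{-1}$ is norm-continuous on $[0,1]$, and for arbitrary $A, B \geq 0$ one approximates by $A + \varepsilon I$, $B + \varepsilon I$, using the joint continuity from above of $!_t$ together with the uniform bound $A \,!_t\, B \leq A + B$ to secure strong measurability and $\mu$-integrability. I would then verify the three axioms for $\sigma$: monotonicity (M1) and the transformer inequality (M2) pass through the integral since the pointwise (in $t$) analogues hold for each $!_t$. For continuity from above (M3), given $A_n \downarrow A$ and $B_n \downarrow B$ strongly one has $A_n \,!_t\, B_n \downarrow A \,!_t\, B$ for each $t$, so for every $x \in H$ the scalar monotone convergence theorem yields
\[
\langle A_n \,\sigma\, B_n\, x, x \rangle \;=\; \int_{[0,1]} \langle A_n \,!_t\, B_n\, x, x \rangle \, d\mu(t) \;\downarrow\; \langle A \,\sigma\, B\, x, x \rangle .
\]
Because $\{A_n \,\sigma\, B_n\}$ is a decreasing sequence in $B(H)^+$, this WOT-convergence automatically upgrades to SOT-convergence.

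The representing function formula (\ref{int rep of OMF}) then drops out by substituting $A = I$, $B = xI$ in (\ref{eq: int rep connection}) and using the scalar identity $I \,!_t\, xI = (1 \,!_t\, x) I$. For the bijection I would exploit the homeomorphism $\varphi: [0,1] \to [0,\infty]$ given by $\varphi(t) = t/(1-t)$, with $\varphi(0) = 0$ and $\varphi(1) = \infty$. A direct calculation shows that for $t \in (0,1)$ and $\lambda = \varphi(t)$,
\[
A \,!_t\, B \;=\; \frac{\lambda + 1}{\lambda} \bigl\{ (\lambda A) \,:\, B \bigr\} ,
\]
while at the endpoints $A \,!_0\, B = A$ and $A \,!_1\, B = B$. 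Consequently the pushforward $\tilde\mu := \varphi_\ast \mu$ on $[0,\infty]$ transforms (\ref{eq: int rep connection}) into precisely (\ref{eq: formula of connection}) with $\alpha = \mu(\{0\})$ and $\beta = \mu(\{1\})$. Since $\mu \mapsto \tilde\mu$ is an affine order-preserving bijection between the cones of finite Borel measures on $[0,1]$ and $[0,\infty]$, and since the Kubo--Ando theorem asserts the same for $\tilde\mu \mapsto \sigma$, the composite $\mu \mapsto \sigma$ inherits all the claimed properties.

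The principal technical obstacle I expect is axiom (M3), where a Bochner-valued integral must be commuted with a decreasing strong limit; my plan circumvents it by descending to scalar quadratic forms, invoking ordinary monotone convergence, and then recovering SOT-convergence from the decreasing-net-in-$B(H)^+$ structure.
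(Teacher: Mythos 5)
Your proposal is correct, and the verification of axioms (M1)--(M3) is essentially the paper's argument (the paper also reduces (M3) to scalar quadratic forms and a convergence theorem for the integral, then reads off strong convergence from the decreasing positive sequence). Where you genuinely diverge is in the proof of bijectivity. The paper works at the level of representing \emph{functions}: it sets $A=I$, $B=xI$, rewrites $\int_{[0,1]} 1\,!_t\,x\,d\mu(t)$ as a L\"{o}wner integral $\int_{[0,\infty]}\frac{x(\ld+1)}{x+\ld}\,d\nu(\ld)$ under the same change of variables $t\mapsto t/(1-t)$, and then invokes the uniqueness and existence clauses of L\"{o}wner's theorem together with the Kubo--Ando isomorphism $\sigma\mapsto f$ to get injectivity and surjectivity separately. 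You instead work at the \emph{operator} level, using the identity $A\,!_t\,B=\frac{\ld+1}{\ld}\{(\ld A):B\}$ with $\ld=t/(1-t)$ (which is correct, including the endpoint cases $!_0$ and $!_1$ matching the $\alpha A+\beta B$ terms) to transport formula (\ref{eq: int rep connection}) directly onto the Kubo--Ando canonical representation (\ref{eq: formula of connection}), and then compose with the affine isomorphism $\tilde\mu\mapsto\sigma$ asserted there. Your route gets the whole bijection in one stroke and makes transparent that the associated measure is just the pushforward of the representing measure under $\Psi$ (a fact the paper states separately afterwards); the cost is that you must justify the change-of-variables formula for a Bochner integral and the operator identity, whereas the paper only needs the scalar computation. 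Your treatment of well-definedness (norm continuity for invertible $A,B$, approximation from above in general, and the bound $A\,!_t\,B\le A+B$) is in fact slightly more careful than the paper's, which only records the uniform bound. I see no gap.
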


This theorem states that there is an affine isomorphism between connections
and finite Borel measures on $[0,1]$ via a suitable form of integral representation.
Moreover, a connection $\sigma$ is a mean if and only if $\mu$ is a probability measure.
Hence every mean can be regarded as an average of weighted harmonic means.
The weighted harmonic means form a building block for general connections.
A surprising example is that the dual of the logarithmic mean
can be expressed as the integral of weighted harmonic means with respect to Lebesgue measure on $[0,1]$.
%\begin{equation}
%		A \,\eta\, B = \int_{[0,1]} A \,!_t\, B \,dm(t), \quad A,B \geq 0
%\end{equation}
%where $m$ is Lebesgue measure.
Recall that a symmetric connection is a connection $\sigma$ such that $A \sigma B = B \sigma A$ for all $A,B \geq 0$.
It follows that every symmetric connection admits an integral representation
\[
		A \sm B = \frac{1}{2} \int_{[0,1]} (A \,!_t\, B)  +  (B \,!_t\, A) \,d \mu(t),
		\quad A,B \geq 0
\]
for some finite Borel measure $\mu$ invariant under
$\Theta: [0,1] \to [0,1]$, $t \mapsto 1-t$.
The integral representation (\ref{eq: int rep connection}) also has advantages in treating
decompositions of connections.
It is shown that a connection $\sigma$ can be written as
\[
	\sigma = \sigma_{ac} + \sigma_{sd} + \sigma_{sc}
\]
where $\sigma_{ac}$, $\sigma_{sd}$ and $\sigma_{sc}$ are connections.
The ``singular discrete part" $\sigma_{sd}$ is a countable sum of weighted harmonic means with nonnegative coefficients.
The ``absolutely continuous part" $\sigma_{ac}$ has an integral representation
with respect to Lebesgue measure $m$ on $[0,1]$.
The ``singular continuous part" $\sigma_{sc}$ has an integral representation with respect to a continuous measure
mutually singular to $m$.
%Furthermore, we decompose means, symmetric connections and
%symmetric means.

Here is the outline of the paper.
In Section 2, we prove Theorem \ref{thm: 1-1 conn and measure} and its consequences.
The integral representations of well-known operator connections with respect to finite Borel measures on $[0,1]$
are also given.
Decompositions of connections, means, symmetric connections and symmetric means are discussed in Section 3.

\section{Connections and Borel measures on the unit interval}

In this section, we investigate the relationship between connections and Borel measures on $[0,1]$.
Our main result states that there is an affine correspondence between connections and finite Borel measures on $[0,1]$
via a suitable integral representation. To prove this fact, recall the following:
%\\

\begin{lemma}[\cite{Lowner}] \label{thm: Lowner}
    A continuous function $f: \reals^+ \to \reals^+$ is operator monotone if and only if there is a unique
    finite Borel measure $\nu$ on $[0,\infty]$ such that
\begin{equation}
		f(x) = \int_{[0,\infty]} \frac{x(\ld+1)}{x+\ld} \,d \nu(\ld), \quad x \geq 0. \label{eq: int rep of OMF}
\end{equation}
\end{lemma}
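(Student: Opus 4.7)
The plan is to prove both directions through the classical Herglotz--Nevanlinna representation of Pick functions (holomorphic self-maps of the upper half-plane). The easy direction is to verify that the right-hand side of (\ref{eq: int rep of OMF}) defines an operator monotone function. For each fixed $\ld \in (0,\infty)$, the kernel $k(x,\ld) = x(\ld+1)/(x+\ld) = (\ld+1)\bigl(1 - \ld/(x+\ld)\bigr)$ is operator monotone on $\reals^+$, since $x \mapsto (x+\ld)^{-1}$ is operator decreasing for $\ld > 0$. The limiting cases $k(x,0) = 1$ and $\lim_{\ld \to \infty} k(x,\ld) = x$ are trivially operator monotone. Since the family is uniformly bounded on compact subsets of $\reals^+$ and the cone of operator monotone functions on $\reals^+$ is closed under positive linear combinations and pointwise monotone limits, integration against the finite Borel measure $\nu$ produces an operator monotone map $f:\reals^+ \to \reals^+$.

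For the converse, I would follow Löwner's analytic route. Operator monotonicity of $f$ on $\reals^+$ forces every Loewner matrix $[(f(x_i)-f(x_j))/(x_i-x_j)]_{i,j}$ at distinct points of $\reals^+$ to be positive semidefinite; this is shown by differentiating the monotonicity condition along one-parameter families of rank-one perturbations. The Loewner positivity is what enables $f$ to extend holomorphically to a Pick function $F$ on the upper half-plane $\mathbb{H} = \{z \in \complex : \mathrm{Im}\,z > 0\}$ with $\mathrm{Im}\,F(z) \geq 0$ for all $z \in \mathbb{H}$. Since $f$ is real on $\reals^+$, Schwarz reflection then gives an analytic extension to $\complex \setminus (-\infty, 0]$.

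Next, apply the Herglotz--Nevanlinna representation: every Pick function on $\mathbb{H}$ admits a unique representation
\[
    F(z) = \ap + \bt z + \int_{\reals} \left( \frac{1}{t-z} - \frac{t}{1+t^2} \right) d\tau(t)
\]
with $\ap \in \reals$, $\bt \geq 0$, and $\tau$ a finite positive Borel measure on $\reals$. Holomorphy of $F$ off $(-\infty, 0]$ forces $\mathrm{supp}(\tau) \subset (-\infty, 0]$, while nonnegativity $f(\reals^+) \subset \reals^+$ constrains $\ap$ and $\bt$ to be nonnegative. A change of variables $\ld = -t$ combined with a rescaling of $\tau$ by a suitable density converts the Nevanlinna kernel into $x(\ld+1)/(x+\ld)$, and the affine terms $\ap$ and $\bt z$ are absorbed into point masses of $\nu$ at $\ld = 0$ and $\ld = \infty$ respectively. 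This produces the desired finite Borel measure $\nu$ on $[0,\infty]$ satisfying (\ref{eq: int rep of OMF}), and uniqueness of $\nu$ transfers from the uniqueness in the Nevanlinna representation.

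The main obstacle is the passage from operator monotonicity on a real half-line to holomorphic extension as a Pick function on $\mathbb{H}$. This analytic-continuation step is the technical heart of Löwner's theorem and is not elementary; by contrast, the subsequent conversion of the Nevanlinna kernel into the form in (\ref{eq: int rep of OMF}) and the absorption of the boundary terms are essentially algebraic bookkeeping. Consequently, in a paper of this scope one would cite the extension step from Löwner's original work or a standard reference (such as Bhatia) rather than reproduce it in full.
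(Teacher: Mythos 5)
The paper does not prove this lemma at all: it is quoted from L\"{o}wner's 1934 paper via the citation \cite{Lowner}, so your sketch is not competing with an in-paper argument but reconstructing the classical one. Your outline follows the standard route and is essentially correct: the easy direction works because each kernel $x \mapsto x(\ld+1)/(x+\ld) = (\ld+1)\bigl(1 - \ld/(x+\ld)\bigr)$ is operator monotone (with the limiting kernels $1$ at $\ld=0$ and $x$ at $\ld=\infty$) and the cone of operator monotone functions is closed under integration against finite positive measures; the converse goes through Loewner matrices, analytic continuation to a Pick function, and the Herglotz--Nevanlinna representation, with the continuation step cited --- which is no worse than what the paper does, since it cites the entire lemma. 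Two points deserve more care than the label ``algebraic bookkeeping.'' First, after the substitution $\ld = -t$ the Nevanlinna kernel becomes $\frac{\ld}{1+\ld^2} - \frac{1}{\ld + x}$, and converting it to a multiple of $x(\ld+1)/(x+\ld)$ introduces the density $1/(\ld(\ld+1))$ against $\tau$; one must verify that the resulting measure is finite near $\ld = 0$, which ultimately rests on $f(0^+)$ existing and being nonnegative (indeed $\nu(\{0\}) = f(0^+)$, by dominated convergence using that the kernel is bounded by $1$ for $x \le 1$). Second, finiteness of $\nu$ on all of $[0,\infty]$ is automatic and worth recording: the kernel is identically $1$ at $x=1$, so $\nu([0,\infty]) = f(1) < \infty$, and this is precisely what makes the correspondence with \emph{finite} Borel measures work. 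With those two checks supplied, your proposal is a faithful outline of the standard proof.
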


%\begin{proof}[Proof of Theorem \ref{thm: 1-1 conn and measure}.]
\emph{Proof of Theorem \ref{thm: 1-1 conn and measure}.}
Since the family $\{A \,!_t\, B\}_{t \in [0,1]}$ is uniformly bounded by
$\max\{\norm{A}, \norm{B}\}  \mu([0,1])$, the binary operation
\[
	A \,\sigma\, B = \int_{[0,1]} A\,!_t\,B\, d\mu(t), \quad A,B \geq 0
\]
is well-defined.
The monotonicity (M1) and the transformer inequality (M2) follow from passing these properties
of weighted harmonic means through the integral.
To show (M3), let $A_n \downarrow A$ and $B_n \downarrow B$.
Then $A_n \, !_t \, B_n \downarrow  A \, !_t \, B$
for $t \in [0,1]$ by the joint-continuity of weighted harmonic means.
We obtain from the dominated convergence theorem that for each $\xi \in H$
\begin{eqnarray*}
    \lim_{n \to \infty} \langle (A_n \sm B_n) \xi, \xi \rangle
        &=&  \lim_{n \to \infty} \langle \int A_n \, !_t \, B_n\, d\mu(t) \xi, \xi \rangle   \\
        &=&  \lim_{n \to \infty} \int \langle (A_n \, !_t \, B_n) \xi, \xi \rangle \, d\mu(t) \\
        &=&  \int  \langle (A \, !_t \, B) \xi, \xi \rangle \, d\mu(t) \\
        &=&  \langle \int   A \, !_t \, B \, d\mu(t)\xi, \xi \rangle, %   \\
        %&=  \langle (A \sm B)\xi, \xi \rangle.
\end{eqnarray*}
i.e. $A_n \sm B_n \downarrow A \sm B$.
Thus, we have established a well-defined map $\mu \mapsto \sigma$.
For injectivity of this map, let $\mu_1$ and $\mu_2$ be finite Borel measures on $[0,1]$ such that
\[
	\int_{[0,1]} A\,!_t\,B\, d\mu_1(t) = \int_{[0,1]} A\,!_t\,B\, d\mu_2(t), \quad A,B \geq 0.
\]
For each $x \geq 0$, define
\[
    f_i (x) = \int_{[0,1]} 1 \,!_t\, x  \,d\mu_i(t)
            = \int_{[0,\infty]} \frac{x(\ld+1)}{x+\ld} \,d\mu_i \Psi(\ld)\quad i=1,2
\]
where $\Psi: [0,\infty] \to [0,1]$, $t \mapsto t/(t+1)$ and the measure $\mu_i \Psi$ is defined 
for each Borel set $E$  by $E \mapsto \mu_i (\Psi(E))$.
Then for each $x \geq 0$,  we have $f_1(x) = f_2(x)$ by setting $A=I$ and $B=xI$.
Theorem \ref{thm: Lowner} implies that $\mu_1 = \mu_2$.

For surjectivity of this map, consider a connection $\sigma$.
By Theorem \ref{thm: Kubo-Ando f and sigma}, there is a unique operator monotone function  $f:\reals^+ \to \reals^+$
such that $f(x) I  =  I  \sm (x I )$ for $x \geq 0$.
By Theorem \ref{thm: Lowner}, there is a finite Borel measure $\nu$ on $[0,\infty]$ such that
(\ref{eq: int rep of OMF}) holds.
%\begin{equation}
%		f(x) = \int_{[0,\infty]} \frac{x(\ld+1)}{x+\ld} \,d \nu(\ld), \quad x \geq 0.
%\end{equation}
Define a finite Borel measure $\mu$ on $[0,1]$ by $\mu = \nu \Psi^{-1}$.
Consider a connection $\eta$ defined by
\[
	A \,\eta\, B = \int_{[0,1]} A\,!_t\,B\, d\mu(t) , \quad A,B \geq 0.
\]
Then $I \,\eta\, (xI) = f(x)I$ for $x \geq 0$.
Theorem \ref{thm: Kubo-Ando f and sigma} implies that $\eta = \sigma$.
Hence the map $\mu \mapsto \sigma$ is surjective.
This map is affine since $\mu$ is finite.
It is easy to see that this map is order-preserving.
The proof of Theorem \ref{thm: 1-1 conn and measure} is therefore complete.
%\end{proof}

From Theorem \ref{thm: 1-1 conn and measure}, every connection $\sigma$ takes the form
\[
	A \,\sigma\, B = \int_{[0,1]} A\,!_t\,B\, d\mu(t), \quad A,B \geq 0
\]
for a unique finite Borel measure $\mu$ on $[0,1]$.
%The integral representation \eqref{eq: int rep connection} indicates that every connection can be viewed
%as a weighed series connection of weighted harmonic means.
Note that the $0$-weighted harmonic mean and the $1$-weighted harmonic mean are the left-trivial mean
and the right-trivial mean, respectively.
We call the measure $\mu$ corresponding to a connection $\sigma$
the \emph{associated measure} of $\sigma$.
The relationship between the associated measure $\mu$ and the representing measure $\nu$ of
a connection is given by $\mu = \nu \Psi^{-1}$.

\begin{example} {\rm
%\textbf{Example 1.} 
\label{ex: assoc measure of connection}
	The associated measure of the $t$-weighted harmonic mean $!_t$ is the Dirac measure $\delta_t$ at $t$.
	In particular, the associated measures of the left-trivial mean and the right-trivial mean
	are $\delta_0$ and $\delta_1$, respectively.
    By affinity of the map $\mu \mapsto \sigma$, the associated measures of the sum and the parallel sum are given by $\delta_0 + \delta_1$ and $\frac{1}{2}\delta_{1/2}$, respectively.
	Similarly, the $\ap$-weighted arithmetic mean has the associated measure
  $(1-\ap) \delta_0  + \ap \delta_1$.
  More generally, the measure
  $
    \sum_{i=1}^n a_i \, \delta_{t_i}
  $,
  where $t_i \in [0,1]$ and $a_i\geq 0$, is associated to the connection $\sum_{i=1}^n a_i \, !_{t_i}$.
  In particular, the probability measure $ (1-\ap) \delta_t + \ap \delta_s$, when $\ap,t,s \in [0,1]$,
  is associated to the $\ap$-weighted arithmetic mean between
  the $t$-weighted harmonic mean and the $s$-weighted harmonic mean.
  }
\end{example}
%\\

\begin{example} \label{ex: assoc measure of weight geometric}	{\rm 
Consider the associated measure of the $\ap$-weighted geometric mean $\#_{\ap}$ for $0<\ap<1$.
From contour integrals, its representing function is given by
\[
    x^{\ap} =  \int_{[0,\infty]} \frac{x \ld^{\ap -1}}{x+\ld} \cdot \frac{\sin \ap \pi}{\pi}  \, dm(\ld).
\]
It follows that the associated measure of $\#_{\ap}$ is %given by
$\displaystyle{%\begin{equation}
    d\mu(t) %&= \frac{\sin \ap \pi}{\pi} \cdot  (\frac{t}{1-t})^{\ap-1} \cdot \frac{1}{ \frac{t}{1-t} +1}
            %    \,dm (\frac{t}{1-t}) \\
        %&= \frac{\sin \ap \pi}{\pi} \cdot  \frac{(1-t)^{1-\ap}}{t^{1-\ap}} \cdot (1-t) \cdot \frac{1}{(1-t)^2}\,dt \\
        = \frac{\sin \ap \pi}{\pi} \cdot  \frac{1}{t^{1-\ap} (1-t)^{\ap}} \,dt.
}
$
}
\end{example}

\begin{example} \label{ex: assoc measure of LM}
{\rm
	Let us compute the associated measure of the logarithmic mean.
	Recall that the representing function of this mean
	is the operator monotone function
	$
		f(x) = (x-1) / \log{x}
	$.
	Then, by Example \ref{ex: assoc measure of weight geometric}, we have
	\begin{eqnarray*}
		f(x) &=& \int_0^1 x^{\ld} \,d \ld
			\, = \, \int_0^1 \int_0^1 \frac{\sin{\ld \pi}}{\pi} \cdot
                    \frac{1 \,!_{t}\, x}{t^{1- \ld}(1-t)^{\ld}} \,dt \,d \ld \\
			&=& \int_0^1 (1 \,!_{t}\, x) \int_0^1 \frac{\sin{\ld \pi}}{\pi t^{1- \ld}(1-t)^{\ld}}
			\,d\ld \,dt.
	\end{eqnarray*}
	Hence the associated measure has density $g$ given  by
	\[
		g(t) = \int_0^1 \frac{\sin{\ld \pi}}{\pi t^{1-\ld} (1-t)^{\ld}} \,d\ld
				 = \frac{1}{t(1-t) \left(\pi^2 + \log^2 (\frac{t}{1-t}) \right) }.
	\]
}
\end{example}

\begin{example} \label{ex: assoc measure of dual of LM}
{\rm
    Consider the dual of the logarithmic mean defined by
    \[
        A \,\eta\, B = \{ LM(B^{-1}, A^{-1}) \}^{-1},
    \]
    where $LM$ denotes the logarithmic mean.
    It turns out that its associated measure is Lebesgue measure on $[0,1]$.
    Indeed, its representing function is given by $x \mapsto \frac{x}{x-1} \log{x}$.
    We thus have the integral representation
    \[
        A \,\eta\, B  =  \int_{[0,1]} A \,!_t\, B \, dt.
    \]
}
\end{example}

\begin{remark} {\rm
	Even though the map $\mu \mapsto \sigma$ is order-preserving,
	the inverse map $\sigma \mapsto \mu$ is not order-preserving in general.
	For example, the associated measures of the harmonic mean $!_{1/2}$
	and the arithmetic mean $\nabla_{1/2} = (!_0 + !_1)/2$
	are given by $\delta_{1/2}$ and $(\delta_0 + \delta_1)/2$, respectively.
	We have $!_{1/2} \leq \nabla_{1/2}$ but $\delta_{1/2} \not\leq (\delta_0 + \delta_1)/2$.
}
\end{remark}
%\\

\begin{corollary} \label{cor: mean iff prob meas}
	A connection is a mean if and only if its associated measure is a probability measure.
\end{corollary}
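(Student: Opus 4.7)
The plan is to exploit the integral representation (\ref{eq: int rep connection}) established in Theorem \ref{thm: 1-1 conn and measure} and reduce the normalization condition $I \sm I = I$ to a statement about the total mass of $\mu$.

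First I would recall that, by definition, a connection $\sigma$ is a mean precisely when $I \sm I = I$. Applying the integral representation with $A = B = I$ gives
\[
    I \sm I \;=\; \int_{[0,1]} I \,!_t\, I \, d\mu(t).
\]
The key observation is that each weighted harmonic mean has the fixed-point property at $I$: for every $t \in [0,1]$, $I \,!_t\, I = [(1-t)I^{-1} + tI^{-1}]^{-1} = I$. (At the endpoints $t=0$ and $t=1$, this is immediate from the trivial mean interpretation noted after Example \ref{ex: assoc measure of connection}.) Hence the integrand is the constant operator $I$ and
\[
    I \sm I \;=\; \mu([0,1])\, I.
\]
Thus $I \sm I = I$ if and only if $\mu([0,1]) = 1$, i.e., $\mu$ is a probability measure.

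Alternatively, one may argue via the representing function: by (\ref{int rep of OMF}), $f(1) = \int_{[0,1]} 1 \,!_t\, 1 \, d\mu(t) = \mu([0,1])$, and a connection is a mean iff $f(1) = 1$. Either route is essentially immediate, so there is no substantive obstacle; the result is a direct harvest from the integral representation theorem.
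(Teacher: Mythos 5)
Your proposal is correct and matches the paper's proof: both evaluate the integral representation at $A=B=I$ to get $I\,\sigma\,I=\mu([0,1])I$ and invoke the normalization criterion (the paper phrases it via $f(1)=1$, which is your stated alternative). No gaps.
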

\begin{proof}
	Let $\sigma$ be a connection with representing function $f$ and associated measure $\mu$.
	Recall that $\sigma$ is a mean if and only if $f(1)=1$.
	From the integral representation (\ref{eq: int rep connection}),
    we have $f(1)I = I \,\sigma\, I = \mu([0,1])I$.
\end{proof}
%\\
%This corollary states that every mean can be viewed as an average of weighted harmonic means.
%Hence weighted harmonic means form building blocks for general operator means.

\begin{corollary}
	There is a one-to-one correspondence between means and probability Borel measures on the unit interval.
	In fact, every mean takes the form
	\[
		A \sm B = \int_{[0,1]} A \,!_t\, B \,d \mu(t), \quad A,B \geq 0
	\]
	for a unique probability Borel measure $\mu$ on $[0,1]$.
\end{corollary}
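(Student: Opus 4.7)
The plan is to obtain this corollary as an immediate restriction of Theorem \ref{thm: 1-1 conn and measure} to the sub-collection of means, using Corollary \ref{cor: mean iff prob meas} to identify the image under the correspondence $\sigma \mapsto \mu$.

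First, I would invoke Theorem \ref{thm: 1-1 conn and measure} to get the bijective, affine, order-preserving map between finite Borel measures $\mu$ on $[0,1]$ and connections $\sigma$ on $B(\HH)^+$, with $\sigma$ given by the integral formula (\ref{eq: int rep connection}). In particular, every connection already admits a unique representation of the stated form.

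Next, I would restrict the bijection to the class of means. By Corollary \ref{cor: mean iff prob meas}, under the bijection $\mu \leftrightarrow \sigma$, a connection $\sigma$ is a mean precisely when its associated measure $\mu$ is a probability measure. So the bijection restricts to a bijection between means and probability Borel measures on $[0,1]$. Since this restriction is exactly the stated correspondence, uniqueness of $\mu$ for a given mean $\sigma$ follows from the uniqueness clause of Theorem \ref{thm: 1-1 conn and measure}, and existence from its surjectivity combined with the ``only if'' direction of Corollary \ref{cor: mean iff prob meas}.

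There is essentially no obstacle: both the main theorem and the characterization of means are already in hand, so the proof is a one-line restriction argument. The only thing worth spelling out is the two-sided check that the restriction lands in (and covers) the probability measures, which is exactly the biconditional given by Corollary \ref{cor: mean iff prob meas}.
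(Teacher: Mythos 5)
Your proposal is correct and matches the paper's intent: the paper gives no separate proof for this corollary, presenting it as the immediate restriction of Theorem \ref{thm: 1-1 conn and measure} to means via Corollary \ref{cor: mean iff prob meas}, which is exactly your argument.
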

%\\

Hence every mean can be regarded as an average of special means, namely, weighted harmonic means.
The weighted harmonic means form building blocks for general means.
%\\

\begin{corollary}
	The weighted harmonic means are the extreme points of the convex set of means.
\end{corollary}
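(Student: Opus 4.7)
The plan is to transport the problem from the cone of means to the cone of probability Borel measures on $[0,1]$, where extreme points are well understood. By the preceding corollary, the map $\mu \mapsto \sigma$ restricts to an affine bijection between probability Borel measures on $[0,1]$ and means on $B(\HH)^+$. Since an affine bijection between convex sets sends extreme points to extreme points, it suffices to identify the extreme points of the convex set $\mathcal{P}([0,1])$ of probability Borel measures on $[0,1]$ and then to translate them back through this bijection.

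The first step is the standard fact that the extreme points of $\mathcal{P}([0,1])$ are exactly the Dirac measures $\{\delta_t : t \in [0,1]\}$. For one direction, if $\delta_t = (1-\lambda)\mu_1 + \lambda \mu_2$ with $\lambda \in (0,1)$ and $\mu_1, \mu_2 \in \mathcal{P}([0,1])$, then every Borel set $E$ not containing $t$ satisfies $\mu_1(E) = \mu_2(E) = 0$, forcing $\mu_1 = \mu_2 = \delta_t$. For the converse, if $\mu$ is not a Dirac measure, there exists a Borel set $E$ with $0 < \mu(E) < 1$, and one obtains a nontrivial convex decomposition $\mu = \mu(E) \mu_1 + \mu(E^c)\mu_2$ by conditioning on $E$ and $E^c$ respectively, producing two distinct probability measures.

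The second step is to transfer this characterization back. By Example~\ref{ex: assoc measure of connection}, the associated measure of the $t$-weighted harmonic mean $\hm_t$ is $\delta_t$, so under the affine bijection the Dirac measures correspond precisely to the weighted harmonic means. Hence the extreme points of the convex set of means are exactly $\{\hm_t : t \in [0,1]\}$.

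I do not foresee a serious obstacle: the affinity and bijectivity of $\mu \mapsto \sigma$ are given by Theorem~\ref{thm: 1-1 conn and measure} and its corollary, and the extremality of Dirac measures in $\mathcal{P}([0,1])$ is elementary. The only point that deserves a brief line of justification is that extremality is preserved under the affine bijection, which follows immediately because any convex decomposition $\sigma = (1-\lambda)\sigma_1 + \lambda \sigma_2$ of a mean into means corresponds, via the inverse of the bijection, to a convex decomposition of the associated probability measure, and vice versa.
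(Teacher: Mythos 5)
Your proposal is correct and follows essentially the same route as the paper: both invoke the affine bijection $\mu \mapsto \sigma$ between probability Borel measures on $[0,1]$ and means (Theorem~\ref{thm: 1-1 conn and measure} together with Corollary~\ref{cor: mean iff prob meas}), the fact that the Dirac measures are exactly the extreme points of the probability measures on $[0,1]$, and the identification $\delta_t \leftrightarrow \hm_t$. The paper merely cites these facts, whereas you spell out the standard details; there is no substantive difference.
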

\begin{proof}
	Use Theorem \ref{thm: 1-1 conn and measure}, Corollary \ref{cor: mean iff prob meas} and
	the fact that the Dirac measures are the extreme points of the convex set
	of probability Borel measures on $[0,1]$.
\end{proof}
%\\

The transpose of a connection $\sigma$ is the connection $(A,B) \mapsto B \,\sigma\, A$.
Hence, a connection is symmetric if and only if it coincides with its transpose.
It was shown in \cite{Kubo-Ando} that if $f$ is the representing function of $\sigma$, then
the representing function of the transpose of $\sigma$ is given by $x \mapsto  x f(1/x)$ for $x>0$.
%\\

\begin{theorem} \label{thm: rep func and rep measure of transp}
	Let $\sigma$ be a connection with associated measure $\mu$. Then
	\begin{itemize}
	\item	the representing function of the transpose of $\sigma$ is given by
				\begin{equation}
					x \mapsto \int_{[0,1]} x \,!_t\, 1 \,d \mu(t), \quad x \geq 0; \label{eq: rep func of transp of con}
				\end{equation}
	\item	the associated measure of the transpose of $\sigma$ is given by
				$\mu \Theta^{-1} =\mu \Theta $ where $\Theta  : [0,1] \to [0, 1], t \mapsto 1-t$.
	\end{itemize}
\end{theorem}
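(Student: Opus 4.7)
The plan is to reduce everything to the integral representation given by Theorem \ref{thm: 1-1 conn and measure}. Writing $\sigma^T$ for the transpose $(A,B) \mapsto B \sm A$, the integral representation of $\sigma$ immediately yields
\[
    A \sm^T B \;=\; B \sm A \;=\; \int_{[0,1]} B \,!_t\, A \, d\mu(t), \quad A, B \geq 0.
\]
Everything will follow from massaging this identity.

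For the first bullet, I would simply evaluate at $A = I$ and $B = xI$. Since scalar operators commute with the functional calculus defining $!_t$, one has $(xI) \,!_t\, I = (x \,!_t\, 1)\, I$, and the integral representation of the representing function in Theorem \ref{thm: 1-1 conn and measure} gives
\[
    g(x) I \;=\; I \sm^T (xI) \;=\; (xI) \sm I \;=\; \int_{[0,1]} (xI) \,!_t\, I \, d\mu(t) \;=\; \left( \int_{[0,1]} x \,!_t\, 1 \, d\mu(t) \right) I,
\]
which is formula (\ref{eq: rep func of transp of con}).

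For the second bullet, the key elementary identity is
\[
    B \,!_t\, A \;=\; \bigl((1-t) B^{-1} + t A^{-1}\bigr)^{-1} \;=\; \bigl(t A^{-1} + (1-t) B^{-1}\bigr)^{-1} \;=\; A \,!_{1-t}\, B
\]
for invertible $A, B > 0$, and this extends to all $A, B \geq 0$ by the joint continuity from above of $!_s$. Substituting this into the integral representation of $\sm^T$ above and performing the change of variables $s = \Theta(t) = 1-t$ yields
\[
    A \sm^T B \;=\; \int_{[0,1]} A \,!_{1-t}\, B \, d\mu(t) \;=\; \int_{[0,1]} A \,!_s\, B \, d(\mu \Theta^{-1})(s).
\]
By the uniqueness of the associated measure (Theorem \ref{thm: 1-1 conn and measure}), the associated measure of $\sm^T$ is $\mu \Theta^{-1}$; since $\Theta$ is an involution, $\mu \Theta^{-1} = \mu \Theta$.

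There is no real obstacle here beyond paying attention to the non-invertible case in the identity $B \,!_t\, A = A \,!_{1-t}\, B$, which is absorbed into the axiom (M3) already used to build the integral representation, and to confirming that the change-of-variables formula is consistent with the paper's earlier convention for pushforward measures (so that the claimed equality $\mu\Theta^{-1} = \mu\Theta$ reads correctly).
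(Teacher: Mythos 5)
Your proof is correct and follows essentially the same route as the paper: both reduce everything to the integral representation of Theorem \ref{thm: 1-1 conn and measure} and obtain the associated measure of the transpose by the change of variables $t \mapsto 1-t$ (the identity $B \,!_t\, A = A \,!_{1-t}\, B$). The only cosmetic difference is in the first bullet, where the paper invokes the Kubo--Ando fact that the transpose's representing function is $x \mapsto x f(1/x)$ and computes $x\int_{[0,1]} 1 \,!_t\, \tfrac{1}{x}\, d\mu(t)$, whereas you evaluate the integral representation of the transpose directly at the pair $(xI, I)$; both amount to the same one-line calculation.
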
	
\begin{proof}
	The set function $\mu\Theta$ is clearly a finite Borel measure on $[0,1]$.
	Since the representing function $f$ of $\sigma$ is given by (\ref{int rep of OMF}),
	the representing function of the transpose of $\sigma$ is given by
	\[
		x f(\frac{1}{x})
		= x \int_{[0,1]} 1 \,!_t\, \frac{1}{x}\, d\mu(t)
		= \int_{[0,1]} x \,!_t\, 1\, d\mu(t) %\\
		%&= \int_{[0,1]} x \,!_{1-t} \, 1 \, d\mu(1-t) = \int_{[0,1]} 1 \,!_{t}\, x\, d\mu\Theta(t)
	\]
	for each $x>0$. By continuity, the representing function of the transpose of $\sigma$ is
	given by (\ref{eq: rep func of transp of con}).
	%The correspondence between connections and Borel measures in
	By changing the variable $t \mapsto 1-t$ and Theorem \ref{thm: 1-1 conn and measure},
	we obtain that the associated measure of the transpose of $\sigma$ is $\mu \Theta$.
\end{proof}
%\\

We say that a Borel measure $\mu$ on $[0,1]$ is \emph{symmetric} if $\mu \Theta = \mu$.
%\\

\begin{corollary}	\label{cor: symmetric conn}
	There is a one-to-one correspondence between symmetric connections and finite symmetric Borel measures
	on $[0,1]$ via the integral representation
	\[
		A \sm B = \frac{1}{2} \int_{[0,1]} (A \,!_t\, B)  +  (B \,!_t\, A) \,d \mu(t),
		\quad A,B \geq 0 \label{int rep of symmetric con}
	\]
	The representing function of the connection $\sigma$ in (\ref{int rep of symmetric con})
	can be written by
	\begin{equation}
		f(x) = \frac{1}{2} \int_{[0,1]} (1 \,!_t\, x) + (x \,!_t\, 1 ) \,d \mu(t), \quad x \geq 0
	\end{equation}
	and its associated measure is $\mu$.
	In particular, a connection is symmetric if and only if its associated measure is symmetric.
\end{corollary}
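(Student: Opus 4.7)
The plan is to deduce the corollary by combining Theorem \ref{thm: 1-1 conn and measure} with Theorem \ref{thm: rep func and rep measure of transp}, together with a single change-of-variable identity for symmetric measures.

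First I would settle the last assertion, since it drives everything else. By Theorem \ref{thm: 1-1 conn and measure}, a connection $\sigma$ is determined by its associated measure $\mu$, and by Theorem \ref{thm: rep func and rep measure of transp} the transpose corresponds to $\mu\Theta$. Since $\sigma$ is symmetric iff it equals its transpose, the uniqueness clause in Theorem \ref{thm: 1-1 conn and measure} gives immediately that $\sigma$ is symmetric iff $\mu\Theta = \mu$, i.e.\ iff $\mu$ is symmetric in the sense just defined. This already yields the desired bijective correspondence: the map $\mu \mapsto \sigma$ of Theorem \ref{thm: 1-1 conn and measure} restricts to a bijection between finite symmetric Borel measures on $[0,1]$ and symmetric connections.

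Next I would show that, for symmetric $\mu$, the standard representation $A\sigma B = \int_{[0,1]} A\,!_t\,B\,d\mu(t)$ coincides with the symmetrized formula appearing in the statement. The key identity is $B\,!_t\,A = A\,!_{1-t}\,B$. Using the change-of-variable formula $\int h\circ\Theta\,d\mu = \int h\,d(\mu\Theta)$ together with $\mu\Theta = \mu$, I get
\[
\int_{[0,1]} B\,!_t\,A\,d\mu(t) = \int_{[0,1]} A\,!_{1-t}\,B\,d\mu(t) = \int_{[0,1]} A\,!_t\,B\,d\mu(t),
\]
so averaging the two expressions recovers $\int A\,!_t\,B\,d\mu(t)$. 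This establishes the symmetric integral representation and, as a byproduct, identifies the associated measure of the symmetrized formula as $\mu$ itself.

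The formula for the representing function then follows by plugging $A = I$, $B = xI$ into the symmetric integral representation, using $f(x)I = I\,\sigma\,(xI)$ from Theorem \ref{thm: Kubo-Ando f and sigma}. The only slightly delicate point is the change of variable step above, but this is routine once one knows $\mu$ is invariant under $\Theta$, since $\Theta$ is a homeomorphism of $[0,1]$ with $\Theta^{-1} = \Theta$; no real obstacle is expected.
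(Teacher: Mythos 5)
Your argument is correct and follows exactly the route the paper intends: the paper's own proof is the one-line citation of Theorems \ref{thm: 1-1 conn and measure} and \ref{thm: rep func and rep measure of transp}, and you have simply filled in the details (the equivalence $\sigma$ symmetric $\Leftrightarrow$ $\mu\Theta=\mu$, the identity $B\,!_t\,A = A\,!_{1-t}\,B$ combined with the change of variables, and the evaluation at $A=I$, $B=xI$). No gaps; this is the same approach, made explicit.
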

\begin{proof}
	It follows from Theorems \ref{thm: 1-1 conn and measure} and \ref{thm: rep func and rep measure of transp}.
\end{proof}
%\\

\begin{corollary}
	There is a one-to-one correspondence between symmetric means and
	probability symmetric Borel measures on the unit interval.
	In fact, every symmetric mean takes the form
	\[
		A \sm B = \frac{1}{2} \int_{[0,1]} (A \,!_t\, B) + (B \,!_t\, A) \,d \mu(t), \quad A,B \geq 0
	\]
	for a unique probability symmetric Borel measure $\mu$ on $[0,1]$.
\end{corollary}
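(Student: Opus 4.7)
The plan is to obtain this corollary as a direct synthesis of Corollary \ref{cor: mean iff prob meas} (mean $\Leftrightarrow$ probability associated measure) and Corollary \ref{cor: symmetric conn} (symmetric connection $\Leftrightarrow$ symmetric associated measure, with the stated symmetrized integral representation). Since a symmetric mean is by definition both a mean and a symmetric connection, I would simply intersect the two characterizations at the level of the associated measure: the associated measure must simultaneously be a probability measure and invariant under $\Theta \colon t \mapsto 1-t$.

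Concretely, I would first invoke Corollary \ref{cor: symmetric conn} to write any symmetric connection $\sigma$ as
\[
    A \sm B = \frac{1}{2} \int_{[0,1]} (A \,!_t\, B) + (B \,!_t\, A) \,d\mu(t)
\]
for a unique finite symmetric Borel measure $\mu$ on $[0,1]$, and to identify $\mu$ as the associated measure of $\sigma$. Then I would apply Corollary \ref{cor: mean iff prob meas}: $\sigma$ is a mean precisely when $\mu([0,1])=1$. Conversely, given a probability symmetric Borel measure $\mu$, the above formula defines a symmetric connection by Corollary \ref{cor: symmetric conn}, and evaluating at $A=B=I$ using $I\,!_t\,I = I$ gives
\[
    I \sm I = \frac{1}{2}\int_{[0,1]} (I + I)\, d\mu(t) = \mu([0,1])\,I = I,
\]
so $\sigma$ is a mean. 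The bijectivity and uniqueness are inherited from Corollary \ref{cor: symmetric conn} by restricting the correspondence to the subclass of probability measures.

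There is no real obstacle here; the result is essentially a bookkeeping step combining two previously established bijections. The only point to be careful about is to state that the associated measure (rather than some other auxiliary measure) is the object simultaneously constrained to be symmetric and to be a probability measure, and to verify the normalization $I\sm I = I$ from the symmetrized integral representation rather than from the original unsymmetrized form (\ref{eq: int rep connection}), since Corollary \ref{cor: symmetric conn} uses the symmetrized version. Both checks are immediate from $I\,!_t\,I = I$ and $\mu([0,1])=1$.
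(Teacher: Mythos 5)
Your proposal is correct and follows exactly the paper's route: the paper proves this corollary in one line by combining Corollary \ref{cor: mean iff prob meas} with Corollary \ref{cor: symmetric conn}, which is precisely the intersection of the two characterizations you carry out. Your additional verification that $I \sm I = \mu([0,1])I$ from the symmetrized representation is a sensible explicit check, but it adds nothing beyond what the cited corollaries already guarantee.
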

\begin{proof}
	Use Corollaries \ref{cor: mean iff prob meas} and \ref{cor: symmetric conn}.
\end{proof}

\section{Decompositions of connections}

This section deals with decompositions of connections, means, symmetric connections and symmetric means.
%\\

\begin{theorem} \label{thm: connection decom}
    Let $\sigma$ be a connection on $B(\HH)^+$.
Then there is a unique triple $(\sigma_{ac},\sigma_{sc},\sigma_{sd})$ of connections on $B(\HH)^+$ such that
\[
    \sigma = \sigma_{ac} + \sigma_{sc} + \sigma_{sd} \label{eq: sigma decomposition}
\]
and
\begin{enumerate}
    \item[(i)] $\sigma_{sd}$ is a countable sum of weighted harmonic means with nonnegative coefficients,
    				i.e. there are a countable set $D \subseteq [0,1]$ and a family
            $\{a_{t}\}_{t \in D} \subseteq \reals^+$ such that $\sum_{t \in D} a_{t} < \infty$ and
            \[
                \sigma_{sd} = \sum_{t \in D} a_{t} \,!_{t},
            \]
            i.e. for each $A,B \geq 0$, $A \,\sigma_{sd}\, B = \sum_{t \in D} a_{t} (A \,!_{t}B) $ and the
            series converges in the norm topology;
    \item[(ii)]   there is a (unique $m$-a.e.) integrable function $g: [0,1] \to \reals^+$ such that
            \[
                A \,\sigma_{ac} \, B = \int_{[0,1]} g(t) ( A \,!_{t}\, B) \,dm(t), \quad A,B \geq 0;
                \label{eq: sigma ac}
            \]
    \item[(iii)] its associated measure of $\sigma_{sc}$ is continuous and mutually singular to $m$.
\end{enumerate}
Moreover, the representing functions of $\sigma_{ac}, \sigma_{sd}$ and $\sigma_{sc}$ are given respectively by
\begin{eqnarray}
	f_{sd} (x) &=& \int_{[0,1]} 1\,!_t\, x\, d\mu_{sd} =  \sum_{t \in D} a_{t}(1 \,!_{t}\,x) \label{formula of f_sd}\\
	f_{ac} (x) &=& \int_{[0,1]} 1\,!_t\, x\, d\mu_{ac},  \label{formula of f_ac}\\
	f_{sc} (x) &=& \int_{[0,1]} 1\,!_t\, x\, d\mu_{sc}	\label{formula of f_sc}
\end{eqnarray}
and the associated measure of $\sigma_{sd}$ is given by $\sum_{t \in D} a_{t} \,\delta_{t}$.
\end{theorem}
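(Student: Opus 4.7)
The plan is to lift the classical Lebesgue decomposition of the associated measure of $\sigma$ to a decomposition of the connection itself, using the affine bijection $\mu \leftrightarrow \sigma$ of Theorem \ref{thm: 1-1 conn and measure}. Let $\mu$ be the associated measure of $\sigma$. Applying the Lebesgue decomposition with respect to Lebesgue measure $m$ on $[0,1]$ yields a unique decomposition $\mu = \mu_{ac} + \mu_{sc} + \mu_{sd}$, where $\mu_{ac} \ll m$, $\mu_{sd}$ is purely atomic, and $\mu_{sc}$ is continuous with $\mu_{sc} \perp m$. Each summand is a finite positive Borel measure on $[0,1]$, so Theorem \ref{thm: 1-1 conn and measure} assigns to each a unique connection $\sigma_{ac}$, $\sigma_{sc}$, $\sigma_{sd}$, and affinity of $\mu \mapsto \sigma$ then gives $\sigma = \sigma_{ac} + \sigma_{sc} + \sigma_{sd}$.

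I would then verify properties (i)--(iii) one by one. For (ii), the Radon--Nikodym theorem produces a nonnegative $m$-integrable density $g$ with $d\mu_{ac} = g\,dm$; substitution into (\ref{eq: int rep connection}) gives the claimed representation of $\sigma_{ac}$, with $g$ unique $m$-a.e. Property (iii) is immediate from the construction of $\mu_{sc}$. For (i), I would enumerate the atoms $D \subseteq [0,1]$ of $\mu_{sd}$ and set $a_t = \mu_{sd}(\{t\})$, so that $\mu_{sd} = \sum_{t \in D} a_t \delta_t$ with $\sum_{t \in D} a_t = \mu_{sd}([0,1]) < \infty$. Example \ref{ex: assoc measure of connection} identifies $\delta_t$ as the associated measure of $!_t$. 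The monotonicity of $!_t$ together with $A,B \leq \max\{\norm{A},\norm{B}\}I$ and the fixed-point identity $(cI)\,!_t\,(cI) = cI$ yields the uniform bound $\norm{A\,!_t\,B} \leq \max\{\norm{A},\norm{B}\}$. The finite partial sums of $\sum_{t \in D} a_t (A\,!_t\,B)$ are therefore dominated by $\max\{\norm{A},\norm{B}\} \sum_{t \in D} a_t$, so the series converges in operator norm; dominated convergence against $\mu_{sd}$ in (\ref{eq: int rep connection}) identifies its limit with $A\,\sigma_{sd}\,B$.

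The representing-function formulas (\ref{formula of f_sd})--(\ref{formula of f_sc}) then follow by evaluating each integral at $A = I$, $B = xI$, reducing to (\ref{int rep of OMF}) applied to the appropriate piece, and the asserted associated measure of $\sigma_{sd}$ is exactly the $\mu_{sd}$ we produced. Uniqueness of the triple is immediate from uniqueness of the Lebesgue decomposition together with injectivity in Theorem \ref{thm: 1-1 conn and measure}. The main delicate point, in my view, is part (i): one must verify that the operator-valued integral against the purely atomic $\mu_{sd}$ genuinely coincides with the countable sum $\sum_{t \in D} a_t\,!_t$ in the operator norm topology, rather than merely weakly. The uniform bound on $A\,!_t\,B$ together with summability of $\{a_t\}$ is what makes this step a routine dominated-convergence argument; without such a bound, interchanging sum and integral would be the main obstacle.
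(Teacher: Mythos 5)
Your proposal is correct and follows essentially the same route as the paper: Lebesgue-decompose the associated measure into absolutely continuous, singular continuous, and discrete parts, lift each piece to a connection via the affine bijection of Theorem \ref{thm: 1-1 conn and measure}, use Radon--Nikodym for (ii), and establish norm convergence of the atomic series via the uniform bound $\norm{A\,!_t\,B}\leq\max\{\norm{A},\norm{B}\}$. The paper's proof of the norm convergence in (i) is the same dominated-partial-sums argument you give, phrased as a Cauchy criterion.
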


\begin{proof}
%Let $f$ be the representing function of $\sigma$.
Let $\mu$ be the associated measure of $\sigma$.
Then there is a unique triple $(\mu_{ac},\mu_{sc},\mu_{sd})$
of finite Borel measures on $[0,1]$ such that $\mu= \mu_{ac} + \mu_{sc} + \mu_{sd}$ where
$\mu_{sd}$ is a discrete measure, $\mu_{ac}$ is absolutely continuous with respect to $m$  and
$\mu_{sc}$ is a continuous measure mutually singular to $m$.
Define $\sigma_{ac}, \sigma_{sc}, \sigma_{sd}$ to be the connections associated to 
$\mu_{ac},\mu_{sc},\mu_{sd}$, respectively. Then
\[
    %A \,\sigma_{ac}\, B &=& \int_{[0,1]} A \,!_{t}\,B \, d\mu_{ac}(t), \\ %\label{eq: sigma a}\\
    %A \,\sigma_{sc}\, B &=& \int_{[0,1]} A \,!_{t}\,B \, d\mu_{sc}(t), \\ %\label{eq: sigma sc} \\
    A \,\sigma_{sd}\, B = \int_{[0,1])} A \,!_{t}\,B \, d\mu_{sd}(t)
        = \sum_{t \in D} a_{t} (A \,\,!_{t}\,B), \quad  A,B \geq 0,%\label{eq: sigma sd}
\]
where the series $\sum_{t \in D} a_{t} (A \,\,!_{t}\,B)$ converges in norm.
Indeed, the fact that, for each $n<m$ in $\N$ and $t_i \in [0,1]$,
\begin{eqnarray*}
	 \bigg \Vert \sum_{i=1}^n a_{t_i} (A \,!_{t_i} \, B) - \sum_{i=1}^m a_{t_i} (A \,!_{t_i} \, B)  \bigg \Vert
		%&= \bigg \Vert \sum_{i=n}^m a_{t_i} (A \,!_{t_i} \, B) \bigg \Vert \\
		&\leq & \sum_{i=n+1}^m a_{t_i} \norm{A \,!_{t_i} \, B} \\
		&\leq & \sum_{i=n+1}^m a_{t_i} (\norm{A} \,!_{t_i} \, \norm{B}) \\
		&\leq & \sum_{i=n+1}^m a_{t_i} \max\{\norm{A}, \norm{B}\}
\end{eqnarray*}
together with the condition $\sum_{i=1}^{\infty} a_{t_i} < \infty$ implies the convergence of the series 
$\sum_{i=1}^{\infty} a_{t_i} (A \,!_{t_i}\,B)$.
Using Theorem \ref{thm: Kubo-Ando f and sigma}, the representing functions of $\sigma_{sd}, \sigma_{ac}, \sigma_{sc}$
are given by $f_{sd}, f_{ac}, f_{sc}$ in (\ref{formula of f_sd}), (\ref{formula of f_ac}), 
(\ref{formula of f_sc}), respectively.
The condition (i) comes from the fact that the associated measure of $!_t$ is $\delta_t$ for each $t \in [0,1]$ in Example
\ref{ex: assoc measure of connection}.
The condition (ii) follows from Radon-Nikodym theorem.
The uniqueness of such decomposition is obtained from the uniqueness of the decomposition of measures.
%The one-to-one correspondence between operator monotone functions on $\reals^+$ and finite Borel measures on $[0,1]$
%shows that the associated measures of $\sigma_{ac}, \sigma_{sd}$ and $\sigma_{sc}$ are
%given by $\mu_{ac}, \mu_{sd}$ and $\mu_{sc}$, respectively.
%Since $f = f_{ac}+f_{sd}+f_{sc}$, we have $\sigma = \sigma_{ac} + \sigma_{sd} + \sigma_{sc}$.
%The rest of theorem comes from Theorem \ref{thm: 1-1 conn and measure} and
%Note that the series in \eqref{formula of f_sd} converges since $\sum_{t \in D} a_{t} < \infty$.
\end{proof}
%\\

This theorem says that every connection $\sigma$ consists of three parts.
The ``singular discrete part" $\sigma_{sd}$ is a countable sum of weighted harmonic means.
Such type of connections include the weighted arithmetic means, the sum and the parallel sum.
The ``absolutely continuous part" $\sigma_{ac}$ arises as an integral
with respect to Lebesgue measure, given by the formula (\ref{eq: sigma ac}).
Examples \ref{ex: assoc measure of weight geometric}, \ref{ex: assoc measure of LM} and
\ref{ex: assoc measure of dual of LM} show that
the weighted geometric means, the logarithmic mean and its dual are typical examples of such connections.
The ``singular continuous part" $\sigma_{sc}$ has an integral representation with respect to a continuous measure
mutually singular to Lebesgue measure.
%Examples of such measures correspond to nonconstant continuous functions $F: \R \to \C$
%such that $F$ is of bounded variation, $F(-\infty)=0$ and $F'=0$ almost everywhere.
%One such function is the Cantor function (this gives rise to the Cantor measure).
Hence (aside singular continuous part) this theorem gives an explicit description of general connections.
%\\

\begin{proposition} \label{prop: properties of sigma ac}
Consider the connection $\sigma_{ac}$ defined by (\ref{eq: sigma ac}). Then
    \begin{itemize}
        \item   it is a mean if and only if the average of the density function $g$ is $1$, i.e.
                $
	               \int_0^1 g(t)\,dt = 1.
                $
                %i.e. the average of $g$ on $[0,1]$ is $1$.
        \item   it is a symmetric connection if and only if $g \circ \Theta = g$.
    \end{itemize}
\end{proposition}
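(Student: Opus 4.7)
The plan is to reduce both statements to the measure-theoretic criteria already established, applied to the associated measure $d\mu_{ac}(t) = g(t)\,dm(t)$ of $\sigma_{ac}$.

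For the first bullet, I would invoke Corollary \ref{cor: mean iff prob meas}: $\sigma_{ac}$ is a mean if and only if $\mu_{ac}$ is a probability measure. Since $\mu_{ac}([0,1]) = \int_{[0,1]} g(t)\,dm(t)$, this is equivalent to $\int_0^1 g(t)\,dt = 1$. As a sanity check one can also bypass the corollary and observe directly that $I\,\sigma_{ac}\,I = \int_{[0,1]} g(t)(I\,!_t\,I)\,dm(t) = \bigl(\int_0^1 g\,dt\bigr)I$, so the normalization $I\,\sigma_{ac}\,I = I$ is equivalent to $\int_0^1 g\,dt = 1$.

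For the second bullet, I would apply Corollary \ref{cor: symmetric conn}: $\sigma_{ac}$ is symmetric if and only if $\mu_{ac}\Theta = \mu_{ac}$. Since $\Theta(t) = 1-t$ is a measure-preserving involution of $([0,1],m)$, the change-of-variables formula gives, for any Borel set $E$,
\[
   \mu_{ac}\Theta(E) \;=\; \mu_{ac}(\Theta(E)) \;=\; \int_{\Theta(E)} g\,dm \;=\; \int_E (g \circ \Theta)\,dm.
\]
Hence $\mu_{ac}\Theta$ has Radon–Nikodym derivative $g\circ\Theta$ with respect to $m$, while $\mu_{ac}$ has derivative $g$. By uniqueness of the Radon–Nikodym derivative (up to $m$-a.e.\ equality), $\mu_{ac}\Theta = \mu_{ac}$ if and only if $g\circ\Theta = g$ a.e., which is what is required.

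There is no real obstacle here; the only mildly delicate point is that both equalities live in the $m$-almost-everywhere category, but this is exactly the sense in which $g$ itself is determined in part (ii) of Theorem \ref{thm: connection decom}, so it causes no ambiguity in the statement.
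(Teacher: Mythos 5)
Your proposal is correct and follows the same route as the paper, whose entire proof is the citation ``Use Corollaries \ref{cor: mean iff prob meas} and \ref{cor: symmetric conn}''; you simply supply the routine details (total mass of $g\,dm$ for the mean criterion, and the Radon--Nikodym computation identifying $\mu_{ac}\Theta$ with $(g\circ\Theta)\,dm$ for the symmetry criterion). Nothing further is needed.
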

\begin{proof}
    Use Corollaries \ref{cor: mean iff prob meas} and \ref{cor: symmetric conn}.
\end{proof}
%\\

We say that a density function $g:[0,1] \to \reals^+$ is \emph{symmetric} if $g \circ \Theta = g$.
Next, we decompose a mean as a convex combination of means.
%\\

\begin{corollary} \label{cor: mean decom}
    Let $\sigma$ be a mean on $B(\HH)^+$.
Then there are unique triples $(\widetilde{\sigma_{ac}},\widetilde{\sigma_{sc}},\widetilde{\sigma_{sd}})$
of means or zero connections on $B(\HH)^+$ and $(k_{ac}, k_{sc}, k_{sd})$ of real numbers in $[0,1]$ such that
\[
    \sigma = k_{ac} \widetilde{\sigma_{ac}} + k_{sc} \widetilde{\sigma_{sc}} + k_{sd} \widetilde{\sigma_{sd}},
    \quad k_{ac} + k_{sc} + k_{sd} = 1
    \label{eq: mean decomposition}
\]
and
\begin{enumerate}
    \item[(i)] there are a countable set $D \subseteq [0,1]$ and
            a family $\{a_{t}\}_{t \in D} \subseteq \reals^+$ such that $\sum_{t \in D} a_{t} =1$ and
            $ %\begin{equation}
                \widetilde{\sigma_{sd}} = \sum_{t \in D} a_{t} \,!_{t};
            $%\end{equation}
    \item[(ii)]   there is a (unique $m$-a.e.) integrable function $g: [0,1] \to \reals^+$ with average $1$ such that
    				%$\int_{[0,1]} g(t) \,dm(t) = 1$ and
            \[
                A \,\widetilde{\sigma_{ac}} \, B = \int_{[0,1]} g(t) ( A \,!_{t}\, B) \,dm(t), \quad A,B \geq 0;
            \]
    \item[(iii)] its associated measure of $\widetilde{\sigma_{sc}}$ is continuous and mutually singular to $m$.
\end{enumerate}
\end{corollary}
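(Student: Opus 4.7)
The plan is to reduce this to Theorem \ref{thm: connection decom} together with Corollary \ref{cor: mean iff prob meas} by simply rescaling each piece of the decomposition so that it becomes either a mean or the zero connection.

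First I would invoke Corollary \ref{cor: mean iff prob meas} to conclude that the associated measure $\mu$ of $\sigma$ is a probability measure on $[0,1]$. Applying Theorem \ref{thm: connection decom} to $\sigma$, I obtain the unique decomposition $\sigma = \sigma_{ac} + \sigma_{sc} + \sigma_{sd}$ with associated measures $\mu_{ac}, \mu_{sc}, \mu_{sd}$, where $\mu_{ac}$ is absolutely continuous with respect to Lebesgue measure $m$, $\mu_{sc}$ is continuous and mutually singular to $m$, and $\mu_{sd}$ is discrete. Since the map $\mu \mapsto \sigma$ is affine and since $\mu_{ac} + \mu_{sc} + \mu_{sd} = \mu$ is a probability measure, the total masses satisfy $k_{ac} + k_{sc} + k_{sd} = 1$, where $k_{*} := \mu_{*}([0,1]) \in [0,1]$.

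Next I would define the normalized pieces. For each $* \in \{ac, sc, sd\}$ with $k_{*} > 0$, set $\widetilde{\mu_{*}} = k_{*}^{-1} \mu_{*}$, which is a probability Borel measure of the same type as $\mu_{*}$, and let $\widetilde{\sigma_{*}}$ be the mean associated to $\widetilde{\mu_{*}}$ via Theorem \ref{thm: 1-1 conn and measure}; by Corollary \ref{cor: mean iff prob meas} this is indeed a mean. For each $*$ with $k_{*} = 0$, declare $\widetilde{\sigma_{*}}$ to be the zero connection. In either case $k_{*} \widetilde{\sigma_{*}} = \sigma_{*}$, so summing yields $\sigma = k_{ac}\widetilde{\sigma_{ac}} + k_{sc}\widetilde{\sigma_{sc}} + k_{sd}\widetilde{\sigma_{sd}}$. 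The properties (i), (ii), (iii) carry over from the corresponding properties in Theorem \ref{thm: connection decom}, with the density $g$ in (ii) having average $1$ by Proposition \ref{prop: properties of sigma ac} since $\widetilde{\sigma_{ac}}$ is a mean, and with $\sum_{t \in D} a_{t} = 1$ in (i) since $\widetilde{\mu_{sd}}$ is a probability measure.

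For uniqueness, suppose two such triples and coefficient vectors represent $\sigma$. Then the associated measures of $k_{ac}\widetilde{\sigma_{ac}}$, $k_{sc}\widetilde{\sigma_{sc}}$, $k_{sd}\widetilde{\sigma_{sd}}$ give an absolutely continuous, continuous-singular, and discrete decomposition of $\mu$, respectively. By the uniqueness of the Lebesgue decomposition and of the continuous/discrete splitting of a finite Borel measure on $[0,1]$, these three measures must coincide with $\mu_{ac}, \mu_{sc}, \mu_{sd}$, forcing the $k_{*}$ to equal the total masses above and the $\widetilde{\sigma_{*}}$ to equal those constructed. The only subtlety (and the sole place one needs to be careful) is the degenerate situation in which one or more $k_{*}$ vanishes, but this is precisely why the statement allows $\widetilde{\sigma_{*}}$ to be the zero connection; uniqueness of the triple then holds up to the convention that the $k_{*} = 0$ component is uniquely the zero connection.
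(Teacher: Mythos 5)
Your proposal is correct and follows essentially the same route as the paper: apply Theorem \ref{thm: connection decom}, normalize each component measure by its total mass, and invoke Corollary \ref{cor: mean iff prob meas} and Proposition \ref{prop: properties of sigma ac}. You are in fact slightly more careful than the paper's own proof, which only treats the case where all three component measures are nonzero and leaves the degenerate cases and the uniqueness argument implicit.
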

\begin{proof}
	Let $\mu$ be the associated probability measure of $\sigma = \sigma_{ac} + \sigma_{sd} + \sigma_{sc}$
	and write $\mu = \mu_{ac} + \mu_{sd} + \mu_{sc}$.
	Suppose that $\mu_{ac}$, $\mu_{sd}$ and $\mu_{sc}$ are nonzero.
	Then
	\[
		\mu = \mu_{ac} ([0,1]) \frac{\mu_{ac}}{\mu_{ac} ([0,1])} + \mu_{sd} ([0,1]) \frac{\mu_{sd}}{\mu_{sd} ([0,1])}
				+ \mu_{sc} ([0,1]) \frac{\mu_{sc}}{\mu_{sc} ([0,1])}.
	\]
	Set
	\begin{eqnarray*}
		\widetilde{\mu_{a}} &=& \frac{\mu_{ac}}{\mu_{ac} ([0,1])}, \quad \widetilde{\mu_{sd}}
        = \frac{\mu_{sd}}{\mu_{sd} ([0,1])},
			\quad \widetilde{\mu_{sc}} = \frac{\mu_{sc}}{\mu_{sc} ([0,1])}, \\
		k_{ac} &=& \mu_{ac} ([0,1]), \quad k_{sd} = \mu_{sd} ([0,1]), \quad k_{sc} = \mu_{sc} ([0,1]).
	\end{eqnarray*}
	%Then $\widetilde{\mu_{a}},\widetilde{\mu_{sc}},\widetilde{\mu_{sd}}$
	%are probability measures and $k_{ac}+k_{sd}+k_{sc} = 1$.
	Define $\widetilde{\sigma_{ac}}, \widetilde{\sigma_{sd}}, \widetilde{\sigma_{sc}}$ to be the means corresponding to
	the measures $\widetilde{\mu_{ac}}, \widetilde{\mu_{sd}}, \widetilde{\mu_{sc}}$, respectively.
	Now, apply Theorem \ref{thm: connection decom} and Proposition
    \ref{prop: properties of sigma ac}.
\end{proof}
%\\

We can decompose a symmetric connection
as a nonnegative linear combination of symmetric connections as follows:
%\\

\begin{corollary} \label{cor: symmetric connection decom}
    Let $\sigma$ be a symmetric connection on $B(\HH)^+$.
Then there is a unique triple $(\sigma_{ac},\sigma_{sc},\sigma_{sd})$ of symmetric connections on $B(\HH)^+$ such that
\[
    \sigma = \sigma_{ac} + \sigma_{sc} + \sigma_{sd} \label{eq: sym con decomposition}
\]
and
\begin{enumerate}
    \item[(i)] %$\sigma_{sd}$ is a countable sum of weighted harmonic means with nonnegative symmetric coefficients, i.e.
    there are a countable set $D \subseteq [0,1]$ and a family $\{a_{t}\}_{t \in D} \subseteq \reals^+$ such that
    $a_t = a_{1-t}$ for all $t \in D$, $\sum_{t\in D} a_t < \infty$ and
            $%\begin{equation}
                \sigma_{sd} = \sum_{t \in D} a_{t} \,!_{t};
            $%\end{equation}
    \item[(ii)]   there is a (unique $m$-a.e.) symmetric integrable function $g: [0,1] \to \reals^+$ such that
            \[
                A \,\sigma_{ac} \, B = \frac{1}{2} \int_{[0,1]} g(t)
                ( A \,!_{t}\, B + B \,!_{t}\, A) \,dm(t), \quad A,B \geq 0;
            \]
    \item[(iii)] its associated measure of $\sigma_{sc}$ is continuous and mutually singular to $m$.
\end{enumerate}
\end{corollary}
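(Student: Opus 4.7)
The plan is to reduce the statement to Corollaries \ref{cor: symmetric conn} and the already-proved Theorem \ref{thm: connection decom}. Let $\mu$ be the associated measure of $\sigma$ and write $\mu = \mu_{ac} + \mu_{sd} + \mu_{sc}$ for its decomposition into absolutely continuous, discrete, and singular continuous parts with respect to Lebesgue measure $m$. Theorem \ref{thm: connection decom} provides the decomposition $\sigma = \sigma_{ac} + \sigma_{sd} + \sigma_{sc}$ into (not yet known to be symmetric) connections associated to these three measures. The key step is to argue that each piece is individually symmetric.

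To that end, observe that $\Theta: t \mapsto 1-t$ is an involutive homeomorphism of $[0,1]$ with $m\Theta = m$. Hence for any finite Borel measure $\rho$ on $[0,1]$, $\rho\Theta$ is absolutely continuous (resp.\ discrete, resp.\ continuous and singular to $m$) if and only if $\rho$ is. Since $\sigma$ is symmetric, Corollary \ref{cor: symmetric conn} gives $\mu\Theta = \mu$, and pushing the decomposition through $\Theta$ yields
\[
\mu_{ac} + \mu_{sd} + \mu_{sc} \;=\; \mu_{ac}\Theta + \mu_{sd}\Theta + \mu_{sc}\Theta.
\]
By uniqueness of the three-part Lebesgue decomposition and the preservation property just noted, this forces $\mu_{ac}\Theta = \mu_{ac}$, $\mu_{sd}\Theta = \mu_{sd}$, and $\mu_{sc}\Theta = \mu_{sc}$. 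Applying Corollary \ref{cor: symmetric conn} in reverse, each of $\sigma_{ac}, \sigma_{sd}, \sigma_{sc}$ is symmetric.

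It remains to translate each symmetric measure into the stated form. For (i), write $\mu_{sd} = \sum_{t \in D} a_t \delta_t$; the identity $\mu_{sd}\Theta = \mu_{sd}$ forces $D$ to be closed under $\Theta$ and $a_t = a_{1-t}$, and the integral representation in Theorem \ref{thm: connection decom}(i) gives the series form. For (ii), Radon--Nikodym produces $g \in L^1([0,1],m)$ with $d\mu_{ac} = g\,dm$; combined with $m\Theta = m$, the invariance $\mu_{ac}\Theta = \mu_{ac}$ yields $g \circ \Theta = g$ a.e., i.e.\ $g$ is symmetric in the sense of Proposition \ref{prop: properties of sigma ac}, and then the symmetric integral representation from Corollary \ref{cor: symmetric conn} produces
\[
A\,\sigma_{ac}\,B = \tfrac{1}{2}\int_{[0,1]} g(t)\,(A\,!_t\,B + B\,!_t\,A)\,dm(t).
\]
Condition (iii) is immediate from the properties of $\mu_{sc}$. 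Uniqueness of the triple follows from the uniqueness clause in Theorem \ref{thm: connection decom}.

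The only nontrivial step is the invariance of the three-part Lebesgue decomposition under $\Theta$, but this is essentially automatic once one uses $m\Theta = m$ and the uniqueness of the decomposition; every other ingredient is bookkeeping against results already in place.
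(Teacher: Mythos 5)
Your proposal is correct and follows essentially the same route as the paper: decompose the associated measure $\mu$ into its absolutely continuous, discrete, and singular continuous parts, use $\mu\Theta=\mu$ (Corollary \ref{cor: symmetric conn}) together with the $\Theta$-invariance of each measure class and the uniqueness of the Lebesgue decomposition to conclude that each part is symmetric, and then invoke Theorem \ref{thm: connection decom} and Proposition \ref{prop: properties of sigma ac}. You merely spell out a few details (such as $m\Theta = m$ and the derivation of $a_t = a_{1-t}$ and $g\circ\Theta = g$) that the paper leaves as ``straightforward.''
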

\begin{proof}
	Let $\mu$ be the associated measure of $\sigma$ and decompose
	$\mu = \mu_{ac} + \mu_{sd} + \mu_{sc}$ where
	$\mu_{ac} \ll m$, $\mu_{sd}$ is a discrete measure, $\mu_{sc}$ is continuous
	and $\mu_{sc} \perp m$.
	Then $\mu \Theta = \mu_{ac} \Theta + \mu_{sd} \Theta + \mu_{sc} \Theta$.
	It is straightforward to show that
	$\mu_{ac} \Theta \ll m$, $\mu_{sd} \Theta$ is discrete, $\mu_{sc} \Theta$ is continuous and $\mu_{sc} \Theta \perp m$.
	By Corollary \ref{cor: symmetric conn}, $\mu \Theta = \mu$.
	The uniqueness of the decomposition of measures implies that $\mu_{ac} \Theta = \mu_{ac}, \mu_{sd} \Theta = \mu_{sd}$
	and $\mu_{sc} \Theta = \mu_{sc}$.
	Again, Corollary \ref{cor: symmetric conn} tells us that $\sigma_{ac}, \sigma_{sd}$
	and $\sigma_{sc}$ are symmetric connections.
	The rest of the proof follows from Theorem \ref{thm: connection decom} and Proposition
    \ref{prop: properties of sigma ac}.
\end{proof}
%\\

A decomposition of a symmetric mean as a convex combination of symmetric means
is obtained by normalizing symmetric connections as in the proof of Corollary
\ref{cor: mean decom}.

%\section*{Acknowledgments}
%The author thanks the anonymous authors whose work largely
%constitutes this sample file. He also thanks the INFO-TeX mailing
%list for the valuable indirect assistance he received.

\end{document}